\title{Lower bounds for superpatterns and universal sequences}
\author{Zachary Chroman\thanks{Massachusetts Institute of Technology, Cambridge, MA 02142. Email: \href{mailto:zacchro@mit.edu}{\nolinkurl{zacchro@mit.edu}}.}\and Matthew Kwan\thanks{
Department of Mathematics, Stanford University, Stanford, CA 94305.
Email:
\href{mattkwan@stanford.edu}{\nolinkurl{mattkwan@stanford.edu}}.
Research supported in part by SNSF project 178493 and NSF Award DMS-1953990.}
\and  Mihir Singhal\thanks{Massachusetts Institute of Technology, Cambridge, MA 02139. Email: \href{mailto:mihirs@mit.edu}{\nolinkurl{mihirs@mit.edu}}.}}
\date{}
\begin{document}

\maketitle

\begin{abstract}
A permutation $\sigma\in S_n$ is said to be \emph{$k$-universal} or a \emph{$k$-superpattern} if for every $\pi\in S_k$, there is a subsequence of $\sigma$ that is order-isomorphic to $\pi$. A simple counting argument shows that $\sigma$ can be a $k$-superpattern only if $n\ge (1/e^2+o(1))k^2$, and Arratia conjectured that this lower bound is best-possible. Disproving Arratia's conjecture, we improve the trivial bound by a small constant factor. We accomplish this by designing an efficient encoding scheme for the patterns that appear in $\sigma$. This approach is quite flexible and is applicable to other universality-type problems; for example, we also improve a bound by Engen and Vatter on a problem concerning $(k+1)$-ary sequences which contain all $k$-permutations.
\end{abstract}

\section{Introduction}

We say that a permutation $\sigma \in S_n$ \textit{contains} a permutation $\pi \in S_k$ \emph{as a pattern} when there exists some subset of indices $t_1<\dots<t_k$ such that $\sigma(t_i)<\sigma(t_j)$ if and only if $\pi(i)<\pi(j)$ (that is to say, $\sigma$ has a subsequence that is order-isomorphic to $\pi$). Starting with some foundational work by MacMahon~\cite{MacMahon} and later Knuth~\cite{Knuth}, the subject of permutation patterns has become quite vast over the years; see for example the books \cite{KitaevBook,BonaBook}, the surveys~\cite{StanleySurvey,SteingrimssonSurvey,VatterSurvey}, and Tenner's \emph{Database of Permutation Pattern Avoidance}~\cite{Tenner}.

If a permutation $\sigma$ contains all $k!$ patterns in $S_k$, it is said to be \textit{$k$-universal} or a \textit{$k$-superpattern}. Following some early work in the case $k=3$ by Simion and Schmidt~\cite{SimionSchmidt}, superpatterns were first introduced as a general notion by Arratia in 1999~\cite{Arratia}. Arratia raised perhaps the most fundamental question about superpatterns: how short can a $k$-superpattern be? He gave a very simple construction of a $k$-superpattern of length $k^2$, and observed that any $k$-superpattern $\sigma\in S_n$ must have length at least $n\ge (1/e^2+o(1))k^2$. This lower bound arises from the trivial inequality $\binom{n}{k}\ge k!$, which holds because a permutation $\sigma\in S_n$ has only $\binom{n}{k}$ different subsequences (and therefore at most this many patterns) of length $k$. Arratia conjectured that this trivial lower bound of $(1/e^2+o(1))k^2$ cannot be improved, meaning that there are $k$-superpatterns of length only $(1/e^2+o(1))k^2$.

Arratia's conjecture has remained open for the last twenty years, but various improvements have been made to the upper bound. Following earlier work by Eriksson, Eriksson, Linusson and Wastlund~\cite{Eriksson}, a clever construction due to Miller \cite{miller} achieves a $k$-superpattern of length $(k^2+k)/2$, and this was later improved slightly to $(k^2+1)/2$ by Engen and Vatter \cite{engenvatter}. Actually, the authors of \cite{Eriksson} made a conjecture (contradicting Arratia's conjecture) that $(1/2+o(1))k^2$ is the true minimum length of a $k$-superpattern.

Arratia's conjecture may at first seem a little unmotivated, but when placed in a wider context, it is quite natural. Generally speaking, a mathematical structure is said to be \emph{universal} if it contains all possible substructures, in some specified sense (this concept seems to have been first considered by Rado~\cite{Rado}). For many different notions of universality, there are trivial lower bounds arising from counting arguments in a very similar way to Arratia's superpattern bound, and these bounds are very often known or suspected to be essentially optimal. As a simple example, for any positive integers $k$ and $q$, there is known to exist a \emph{de Bruijn sequence} with these parameters, which is a string over a size-$q$ alphabet containing every possible length-$k$ string exactly once. A much less simple example is the case of universality for finite graphs, a subject that has received a large amount of attention over recent years due in part to applications in computer science (see for example \cite{AKTZ}). A graph is said to be \emph{$k$-induced-universal} if it contains an induced copy of all possible graphs on $k$ vertices. There are at least $2^{\binom k 2}/k!$ graphs on $k$ vertices, and an $n$-vertex graph has at most $\binom n k$ induced subgraphs with $k$ vertices, so a straightfoward computation shows that every $k$-induced-universal graph must have at least $(1+o(1))2^{(k-1)/2}$ vertices (this was first observed by Moon~\cite{Moon}). In a recent breakthrough, Alon \cite{AlonGraph} proved that this trivial lower bound is essentially optimal, by showing that a random graph with $(1+o(1))2^{(k-1)/2}$ vertices typically contains \emph{almost} all $k$-vertex graphs as induced subgraphs, and then proving that the remaining ``exceptional'' graphs can be handled separately without adding too many vertices.

It is very tempting to try to adapt Alon's proof strategy to permutations, to prove Arratia's conjecture. First, one might try to prove that a random permutation of length $n=(1/e^2+o(1))k^2$ typically contains almost all permutations of length $k$ (one imagines that the $\binom n k$ subsequences of a random permutation tend to yield mostly distinct patterns, so the trivial counting bound should not be too far from the truth). Second, one hopes to be able to deal with the few remaining permutations in some more ad-hoc way, without substantially increasing the length of the permutation. In this paper we prove that not only is Arratia's conjecture false, but even the first of these two steps fails.

\begin{thm}
\label{thm:original}
Suppose that $n < (1.000076/e^2)k^2$. Then every $\sigma \in S_n$ contains only $o(k!)$ different patterns $\pi\in S_k$. In particular, there is no $k$-superpattern of length less than $(1.000076/e^2)k^2$.
\end{thm}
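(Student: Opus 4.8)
The plan is to fix an arbitrary $\sigma\in S_n$ and to exhibit an efficient encoding of the patterns $\pi\in S_k$ occurring in $\sigma$, with only $o(k!)$ codewords. To each occurring $\pi$ we attach its \emph{canonical witness} $c_1<\dots<c_k$: the lexicographically least index tuple for which $\sigma(c_1),\dots,\sigma(c_k)$ is order-isomorphic to $\pi$. Distinct patterns have distinct canonical witnesses, so it suffices to count canonical witnesses. We do this by reading the witness left to right: once $c_1,\dots,c_{j-1}$ are fixed, $\sigma(c_j)$ has a well-defined rank $r_j\in\{1,\dots,j\}$ among $\sigma(c_1),\dots,\sigma(c_j)$, and the rank sequence $(r_1,\dots,r_k)$ already determines $\pi$. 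So the problem reduces to bounding the number of rank sequences that arise, and the point is that this number should be provably smaller than $k!$ once $n$ is close to $k^2/e^2$.

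The saving comes from the fact that $r_j$ constrains the location of $c_j$. Having fixed $c_1,\dots,c_{j-1}$, every column to the right of $c_{j-1}$ has a definite rank, so for the canonical witness we are essentially forced to take, for each target rank, the leftmost admissible column of that rank; as $r_j$ ranges over its (at most $j$) values, the resulting columns are distinct, hence the ``skips'' $c_j-c_{j-1}$ run over distinct positive integers. Introducing a parameter $\lambda>1$, using $\mathbf 1\{c_k\le n\}\le\lambda^{\,n-c_k}$, and telescoping the ensuing product of conditional sums $\tfrac1j\sum_{i\le j}\lambda^{-i}$ over $j$, the factor $k!$ cancels and one arrives at a bound of the shape
\[
\#\{\pi\in S_k:\ \pi\ \text{occurs in}\ \sigma\}\ \le\ \lambda^{n}\prod_{j=1}^{k}\Big(\sum_{i=1}^{j}\lambda^{-i}\Big)\ =\ \lambda^{n}(\lambda-1)^{-k}\prod_{j=1}^{k}\big(1-\lambda^{-j}\big).
\]
Taking $\lambda=1+\alpha/k$ and using $\lambda^{n}=e^{(1+o(1))\alpha n/k}$, $(\lambda-1)^{-k}=(k/\alpha)^{k}$, and $\prod_{j\le k}(1-\lambda^{-j})=\exp\!\big((k/\alpha)\!\int_0^{\alpha}\ln(1-e^{-s})\,ds+o(k)\big)$, comparison with $k!=(1+o(1))\sqrt{2\pi k}\,(k/e)^{k}$ shows the bound is $o(k!)$ whenever $n/k^{2}<g(\alpha)$ for an explicit function $g$, and then optimizing $g(\alpha)$ over $\alpha$ produces a threshold.

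The catch — and where the real work lies — is that the idealized version above is too optimistic: a purely greedy left-to-right matching need not find an occurrence of a pattern that does occur (it may commit to a column, e.g.\ $c_1=1$, from which no completion exists), so one must genuinely work with the lexicographically least witness. For that witness, the ``distinct positive-integer skips'' property degrades, since admissibility of a column now also depends on later choices, and the honest argument must replace the clean product by a more conservative quantity (morally, some skips can coincide across different ranks); it is the resulting, weaker optimization over $\lambda$ that yields the modest constant $1.000076/e^2$ rather than something larger. I expect the two main obstacles to be precisely these: first, isolating and proving the correct structural lemma about lexicographically least witnesses — quantifying how much ``spread'' of the skips survives the completability dependence, and absorbing lower-order overhead such as the cost of specifying $c_1$ directly; and second, keeping every estimate sharp to within a $e^{o(k)}$ factor, since the entire improvement over the trivial bound is minute and is contained in the constant of the exponent, so the final optimization must be carried out with care.
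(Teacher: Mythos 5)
Your approach is genuinely different from the paper's, but the central telescoping step has a gap. The bound $\sum_{c_j}\lambda^{-(c_j-c_{j-1})}\le\sum_{i=1}^{j}\lambda^{-i}$ requires that, with $c_1,\dots,c_{j-1}$ fixed, the set of admissible next columns $c_j$ over all relevant witnesses has size at most $j$, with pairwise distinct skips. This holds for the \emph{greedy} witness (leftmost column of the target rank: one candidate per rank, so at most $j$ candidates with distinct skips), but not for the lexicographically least witness. The lex-least $c_j$ is the leftmost column of rank $r_j$ from which \emph{the remainder of $\pi$ can still be completed}; it depends on all of $\pi$, not just on the prefix $(r_1,\dots,r_j)$. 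Two patterns sharing the canonical prefix $(c_1,\dots,c_{j-1})$ and the same target rank $r_j$ can have different $c_j$ (one may need to skip further right to leave room for its particular continuation), so the multiplicity per rank is unbounded and the sum over $c_j$ need not be dominated by $\sum_{i\le j}\lambda^{-i}$. You flag the issue as ``some skips can coincide across different ranks,'' but that is not the actual failure mode: with $c_{j-1}$ fixed, distinct ranks always give distinct $c_j$ and hence distinct skips; the real problem is the unbounded multiplicity \emph{within} a fixed rank. As a consistency check, optimizing your product bound with $\lambda=1+\alpha/k$ gives a threshold $n/k^2<g(\alpha)$ where $g(\alpha)\to 1/4$ as $\alpha\to 0^+$. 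A lower bound of $(1/4-o(1))k^2$ would be a dramatic improvement far beyond what the paper achieves; in reality your bound is valid only for the patterns the greedy actually catches (consistently with the Eriksson--Eriksson--Linusson--Wastlund construction, which catches almost all patterns at length about $k^2/4$), and your sketch contains no control on the patterns it misses, which can be the overwhelming majority.

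For comparison, the paper's proof attacks the problem locally rather than via a global canonical witness. For a pattern occurring at positions $t_1<\dots<t_k$, it fixes a set $I$ of roughly $ck$ even indices at which the width $t_{i+1}-t_{i-1}$ exceeds $dn/k$, and encodes $\pi$ by $\left(I,(t_i)_{i\notin I},(\pi(i))_{i\in I}\right)$: specifying the relative value $\pi(i)$ (at most $k$ choices) instead of the position $t_i$ (at least $dn/k-1$ choices) gains a factor of about $d/e^2>1$ per index in $I$. A separate probabilistic lemma (about gaps of a random $k$-subset, via a Poisson/exponential approximation and Chernoff bounds) shows that all but an $O(\lambda^k)$ fraction of $k$-subsets of $[n]$ do have $ck$ such large widths, and the exceptional sets are few enough to absorb with the naive encoding. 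Your scheme could in principle be much stronger, but as written the key step does not hold for the witnesses you actually count, and fixing it --- quantifying how far the lex-least $c_j$ can drift past the greedy one, uniformly over continuations --- is exactly the missing idea.
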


We have made some effort to optimize the constant in \cref{thm:original}, where it would not negatively affect the readability of the proof. However, it would be quite complicated to fully squeeze the utmost out of our proof idea (see \cref{sec:concluding} for further discussion). In any case, obtaining a constant that is substantially larger than $1/e^2$ seems quite out of reach.

To very briefly describe the approach in our proof of \cref{thm:original}, it is instructive to reinterpret Arratia's trivial bound in an ``information-theoretic'' way. Namely, for each pattern $\pi \in S_k$ that appears in $\sigma$, note that $\sigma$ gives a way to encode $\pi$ by a set of indices $t_1 < \dots < t_k$. Since there are at most $\binom n k$ possible outcomes of this encoding, there are at most $\binom n k$ different patterns that we could have encoded, yielding Arratia's bound. The key observation towards improving this bound is that the aforementioned encoding is often slightly wasteful. For example, suppose that we are encoding a pattern $\pi$ that appears in $\sigma$ at indices $t_1<\dots<t_k$, and suppose that $t_{i+1}-t_{i-1}> k$ for some $i$. Now, having specified all the indices except $t_i$, there are more than $k$ possibilities for $t_i$. This means it would be ``cheaper'' simply to specify the relative value $\pi(i)$ itself, rather than to specify the index $t_i$ (note that there are only $k$ choices for this relative value, and $\pi(i)$ together with $t_1,\dots,t_{i-1},t_{i+1},\dots,t_k$ still fully determine $\pi$). More generally, we can design an encoding scheme that decides for each $i$ whether to specify the index $t_i$ or the relative value $\pi(i)$ depending on the size of $t_{i+1}-t_{i-1}$. Note that the average value of $t_{i+1}-t_{i-1}$ is about $2k/e^2$ (which is substantially less than $k$), so for most $i$ we will end up just specifying $t_i$, as in the trivial encoding scheme. Our small gain comes from the fact that for almost all choices of $t_1,\dots,t_k$, there are a small number of $i$ for which the difference $t_{i+1}-t_{i-1}$ is much larger than average (if the $t_1,\dots,t_k$ are chosen randomly, then the consecutive differences $t_{i+1}-t_{i}$ are well-approximated by independent exponential random variables). The details of the proof are in \cref{sec:original,sec:widthboundproof}.

At first glance the approach sketched above may seem to be an extremely general way to improve trivial counting-based lower bounds for basically any universality-type problem. However, for many interesting problems this approach does not seem to give us even tiny (lower-order) improvements. We discuss this in the concluding remarks (\cref{sec:concluding}).
We were, however, able to find another problem closely related to Arratia's conjecture, for which our approach is useful. To describe this, we generalize to the setting where $\sigma$ is a sequence of (not necessarily distinct) elements of $[r]:=\{1,\dots,r\}$. We define containment of a pattern identically: if $\sigma \in [r]^n$ is a sequence and $\pi \in S_k$ is a pattern, then $\sigma$ contains $\pi$ if there exist indices $t_1 < \dots < t_k$ such that $\sigma(t_i)<\sigma(t_j)$ if and only if $\pi(i)<\pi(j)$. Again, $\sigma$ is said to be \textit{$k$-universal} if it contains all $k$-patterns. Note that any $k$-universal sequence yields a $k$-superpattern of the same length, since it is possible to find a permutation with the same relative ordering (breaking ties arbitrarily).

Engen and Vatter, in their survey paper \cite{engenvatter}, considered the question of finding the shortest possible $k$-universal sequence in the cases $r = k, k+1$. In the case $r=k$, it is known that the answer must be $(1+o(1))k^2$, as proved by Kleitman and Kwiatkowski~\cite{KleitmanKwiatkowski}. The case $r = k+1$ is of particular interest since Miller's construction of a $k$-superpattern of length $n=(k^2+k)/2$ actually comes from a $k$-universal sequence in $[k+1]^n$. Meanwhile, until now the best lower bound was the trivial one obtained as follows. Writing $a_m$ for the number of occurrences of each $m\in [r]$ in $\sigma$, the number of subsequences of $\sigma$ with elements $m_1, m_2, \cdots, m_k$ is equal to $a_{m_1}a_{m_2} \dots a_{m_k}$. This expression is at most $(n/k)^k$ by convexity. Thus, the total number of subsequences that don't contain repeated elements is at most $\binom{r}{k} (n/k)^k$. This must be at least $k!$ for $\sigma$ to be a $k$-universal sequence, so as long as $r = (1+o(1))k$ we must have $n \ge (1/e+o(1))k^2$.
We improve upon this trivial bound as follows.

\begin{thm}
\label{thm:k+1}
Suppose that $n<(1+e^{-600}) k^2/e$. Let $\sigma \in [(1+e^{-1000})k]^n$ be a sequence of length $n$. Then, $\sigma$ can only contain $o(k!)$ patterns in $S_k$.
\end{thm}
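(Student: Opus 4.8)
The plan is to mimic the encoding-scheme strategy used for Theorem~\ref{thm:original}, but now encoding occurrences of patterns in the $(1+o(1))k$-ary sequence $\sigma$ rather than in a permutation. Suppose $\sigma$ contains $\pi\in S_k$ at indices $t_1<\dots<t_k$; the trivial encoding records the unordered multiset of "levels" $\sigma(t_1),\dots,\sigma(t_k)$ together with the data of which index realizes which level, but as the excerpt's convexity argument shows, the bottleneck is really the choice of the $k$-subset of positions weighted by the multiplicities $a_m$. I would first reduce to the ``balanced'' case: by convexity the trivial bound $\binom{r}{k}(n/k)^k$ is only within a $1+o(1)$ factor of $k!$ when almost all $a_m$ are very close to $n/k=(1/e+o(1))k$; if a non-negligible fraction of the mass is unbalanced one already gets $o(k!)$ patterns, so we may assume $a_m = (1+o(1))n/k$ for essentially all $m$, and in particular $r=(1+o(1))k$ forces us to use nearly all levels, each roughly $n/k$ times. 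Under this reduction the positions at which level $m$ appears are spread out with gaps of typical size $\approx k$ (since there are $\approx n/k$ of them among $n$ positions), which is exactly the regime where the encoding trick bites.

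Next I would set up the improved encoding. Think of specifying a pattern-occurrence as choosing, for each $i$ in turn, the level $\ell_i=\sigma(t_i)\in[r]$ and then the specific position $t_i$ among the $a_{\ell_i}$ positions carrying that level and lying above $t_{i-1}$. The number of available positions for $t_i$ is governed by the gap between consecutive occurrences of level $\ell_i$; when this local gap exceeds $k$, it is cheaper to record the \emph{relative rank} of value $\ell_i$ among $\ell_1,\dots,\ell_k$ (only $\le k$ choices) instead of the exact position, exactly as in the sketch for Theorem~\ref{thm:original}, since the rank data together with the remaining positions still reconstructs $\pi$. The point is that with $a_m\approx n/k$ occurrences of each level among $n$ slots, the successive gaps between same-level occurrences behave like i.i.d.\ exponentials with mean $k$, so a constant fraction of the $k$ steps will see a gap well above the mean, and at each such step we save a constant-factor in the count. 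Summing the logarithms of the per-step savings over the typical occurrence gives a multiplicative saving of $(1+c)^{k}$ for some absolute $c>0$ in the number of distinct encodable patterns, which translates into the constant $1+e^{-600}$ after tracking the (large, unoptimized) constants. I would borrow the large-deviation/concentration estimates for these gap statistics directly from the "width bound" established for Theorem~\ref{thm:original} in Section~\ref{sec:widthboundproof}, applied level-by-level and then combined by a union bound over the $r=(1+o(1))k$ levels.

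The main obstacle, and where the argument genuinely differs from the permutation case, is the extra freedom coming from repeated levels and from the choice of which levels to use: in the permutation setting every ``level'' is used exactly once, whereas here the encoder gets to choose the multiset of levels, and a clever choice of multiplicities could in principle compensate for the savings in the positional encoding. Handling this requires carefully balancing the entropy of the level-multiset against the positional savings — concretely, showing that any deviation from the balanced profile $a_m\approx n/k$ that would help the level-encoding simultaneously hurts the convexity bound $\prod a_{m_i}\le (n/k)^k$ by at least as much, so that the product of the two contributions is still $o(k!)$. I expect this trade-off to be the technical heart of the proof; once it is set up correctly, the gap-concentration estimates and the final summation are essentially the same computations as in the proof of Theorem~\ref{thm:original}, which is presumably why the authors only claim a tiny (and deliberately crude, e.g.\ $e^{-600}$) improvement rather than an optimized constant. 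The hypothesis $\sigma\in[(1+e^{-1000})k]^n$ is exactly the slack needed to make the ``$r=(1+o(1))k$'' reduction quantitative while leaving room for the $1+e^{-600}$ gain in $n$.
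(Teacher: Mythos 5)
Your high-level philosophy — improve the trivial bound by a local encoding trick — matches the paper, and your reduction from alphabet $[(1+e^{-1000})k]$ to $[k]$ is roughly what the paper does (it restricts to the $\binom{k+t}{k}$ choices of $k$ symbols and union-bounds). But the core of your encoding is not the paper's, and as described it has a real gap. You propose to examine the gaps between consecutive occurrences of a fixed level $\ell$ in $\sigma$, assert that these "behave like i.i.d.\ exponentials with mean $k$," and then import the width-bound concentration from \cref{sec:widthboundproof}. That step cannot work: the gaps between same-level occurrences are a \emph{deterministic} property of the adversarially chosen $\sigma$, not of a random choice of indices, so there is no randomness over which to take a large-deviation bound. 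In \cref{lem:widthbound} the randomness comes entirely from choosing the $k$-subset $T$ uniformly; here the analogous objects (the $a_\ell-1$ gaps of level $\ell$) are fixed. The adversary could, for example, cluster each level into a single short block, making all same-level gaps length $1$, and your scheme would extract no savings.

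The paper's actual mechanism is different and is precisely what gets around this. It relabels so that $\sigma(t_m)=m$, specifies $t_1,\dots,t_{k-ck}$ exactly, and for the last $ck$ symbols records only the relative position of $t_m$ among the other chosen indices. The savings come from counting ``$m$-splits'': how many of the $a_m-1$ gaps between occurrences of $m$ are cut by some chosen $t_i$. Here the randomness is over the choice of the first $k-ck$ indices, and the concentration is an Azuma--Hoeffding bound on the number of splits (\cref{lem:expsplits}), not a reuse of \cref{lem:widthbound}. Crucially, to show the expected number of splits is bounded away from $a_m$, the paper needs a purely structural lemma (\cref{lem:fullgaps}) ruling out that too many $m$-gaps are ``full'' — this is the step that converts the adversarial hypothesis $n<(1+e^{-600})k^2/e$ into usable structure, and it has no analogue in your sketch. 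Your instinct that balancing level-entropy against positional savings is the heart of the matter points at a real issue, but the resolution the paper uses (fix the level-to-index correspondence $\sigma(t_m)=m$ so the level multiset carries no extra freedom, then argue about splits) is a concretely different decomposition, and your proposal is missing the structural lemma needed to make any version of the positional savings go through against a worst-case $\sigma$.
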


The proof of \cref{thm:k+1} appears in \cref{sec:k+1}. We made no effort to optimize the constants.

\section{Lower-bounding the length of a superpattern} \label{sec:original}

In this section we will prove \cref{thm:original}, modulo a simple probabilistic lemma that will be proved in the next section. We assume, as we may, that $k$ is odd, and we will generally omit floor symbols in quantities that may not be integers.

Let $\sigma\in S_n$ be a permutation of length $n < 1.000076 k^2/e^2$. Our objective is to prove that $\sigma$ contains $o(k!)$ patterns. As outlined in the introduction, the idea is to define an efficient encoding; namely, an injective function $\phi$ from the set of all patterns in $\sigma$ into a set of size $o(k!)$. Our encoding is always going to specify the positions (in $\sigma$) of the odd-indexed entries of $\pi$, but for for the even indexed-entries, we are going to encode either the position or the relative value, depending on which of the two ``requires less information'' (unless this encoding scheme does not provide significant savings, in which case we just encode the positions of all entries of $\pi$).

To describe our encoding more formally, let $\pi$ be a pattern of length $k$ that is contained in $\sigma$. 
Fix a set of indices $T = \{t_1, \dots, t_k\}$, where $t_1 < \dots < t_k$, such that the values of $\sigma$ on $t_1, \dots, t_k$ form the pattern $\pi$. For even $1 < i < k$, define $b_i = t_{i+1} - t_{i-1}$ to be the \textit{width} around $t_i$ (so there are $(k-1)/2$ widths). Note that the widths only depend on the indices $t_i$ for odd $i$. Also, note that if one knows $t_{i-1}$ and $t_{i+1}$, then the number of possibilities for $t_i$ is $b_i-1$. That is to say, if the width around $t_i$ is large, then specifying $t_i$ is very ``expensive.'' Now, to define our encoding $\phi$ we split into cases. With foresight, define $c = 0.00075$ and $d = 8.180$.

{\bf Case 1:} If fewer than $ck$ of the widths are at least $dn/k$, then there is not much to be gained by a creative encoding scheme, and we simply encode $\pi$ by the indices $t_1,\dots,t_k$ (formally, set $\phi(\pi)=(t_1,\dots,t_k)$).

{\bf Case 2:} Otherwise, at least $ck$ of the widths are at least $dn/k$. Let $I$ be a set of exactly $ck$ even integers such that $b_i\ge dn/k$ for each $i\in I$ (the specific choice of $I$ can be basically arbitrary, but we make sure it only depends on the indices $t_i$ where $i$ is odd). Then, encode $\pi$ by specifying $\pi(i)$ for each $i\in I$, and specifying $t_i$ for each $i\notin I$. Formally, we can define $\phi(\pi)$ to be the triple $\left(I,(t_i)_{i\notin I},(\pi(i))_{i\in I}\right)$.

It is important to note that our encoding $\phi$ is injective: in both cases, it is possible to recover $\pi$ by knowing $\sigma$ and $\phi(\pi)$. To see this in Case 2, note that if we know $I$ and $(\pi(i))_{i\in I}$, then we know the set of values of $\pi(i)$ for $i\notin I$ (but not their relative order). This relative order is obtainable from the relative order of the $\sigma(t_i)$, for $i\notin I$.

Now, in order to prove \cref{thm:original}, we need to show that there are not too many possibilities (substantially fewer than $\binom n k$) for the value of $\phi(\pi)$. The main technical ingredient is the fact that Case 1 is needed only rarely, as follows. Let $\lambda=0.999924$.

\begin{lem} \label{lem:widthbound}
Let $\sigma\in S_n$ be a permutation of length $n < 1.000076 k^2/e^2$, with $n=\Omega(k^2)$, and assume that $k$ is sufficiently large. For all but $O(\lambda^k) \binom{n}{k}$ of the size-$k$ subsets $T\subseteq[n]$, at least $ck$ of the widths are at least $dn/k$.
\end{lem}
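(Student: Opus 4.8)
The plan is to think of a uniformly random size-$k$ subset $T = \{t_1 < \dots < t_k\} \subseteq [n]$ and show that, with probability $1 - O(\lambda^k)$, at least $ck$ of the $(k-1)/2$ widths $b_i = t_{i+1} - t_{i-1}$ (for even $i$) exceed $dn/k$. Since only the odd-indexed coordinates $t_1, t_3, \dots, t_k$ enter the widths, I would first pass to the marginal distribution of those $(k+1)/2$ coordinates: they form a uniformly random size-$(k+1)/2$ subset of $[n]$. Writing $s_0 = 0 < s_1 < s_2 < \dots < s_{(k-1)/2} < s_{(k+1)/2} = $ (the odd-indexed $t$'s, reindexed), the relevant widths are the consecutive gaps $g_j = s_j - s_{j-1}$, and I want to show that at least $ck$ of these $\approx k/2$ gaps are at least $dn/k$. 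Equivalently, I want the number of gaps of size $\ge dn/k$ to be at least $2c \cdot (k/2) \cdot (1 + o(1))$, i.e.\ a constant fraction $2c$ of all gaps (note $2c = 0.0015$, a small fraction).

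The heart of the argument is a standard comparison between uniform order statistics and i.i.d.\ exponentials. The gaps $(g_1, \dots, g_{(k+1)/2})$ of a random $m$-subset of $[n]$ (with $m = (k+1)/2$) are distributed approximately as $n \cdot (E_1, \dots, E_m)/(E_1 + \dots + E_{m+1})$ where the $E_j$ are i.i.d.\ $\mathrm{Exp}(1)$; more precisely, I would either invoke the continuous (Dirichlet) analogue directly or make the discretization rigorous by a coupling/comparison inequality. Under this model, each normalized gap $g_j / n$ is roughly $E_j / m$ (since $\sum E_j \approx m+1 \approx k/2$ by concentration), so $g_j \ge dn/k$ corresponds roughly to $E_j \ge 2d$. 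Thus the number of large widths is, up to lower-order corrections, a $\mathrm{Binomial}(m, p)$ random variable with $p = e^{-2d}$. Plugging in $d = 8.180$ gives $p = e^{-16.36} \approx 7.9 \times 10^{-8}$, and $mp \approx (k/2) e^{-16.36}$. The number of large widths being below $ck = 0.00075 k$ is a large-deviation (lower-tail) event for this binomial with threshold $ck$ which is much larger than the mean $mp$; a Chernoff bound gives probability $\exp(-\Theta(k))$, and I would check that the exponent is at least $\log(1/\lambda) = -\log(0.999924) \approx 7.6 \times 10^{-5}$ per unit of $k$, i.e.\ that $\lambda^k$ dominates. This is precisely where the constants $c$, $d$, $\lambda$ have been tuned against each other, so the calculation must be done carefully: one wants the Chernoff exponent for $\mathrm{Binomial}(k/2, e^{-2d})$ falling below $ck$ to exceed $\log(1/\lambda)$.

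The main obstacle, and the part requiring the most care, is making the exponential approximation rigorous with explicit (non-asymptotic) control, because the final bound $\lambda^k$ with $\lambda$ so close to $1$ leaves very little slack. Concretely I would: (i) handle the denominator $E_1 + \dots + E_{m+1}$ by noting it concentrates around $m+1$, and absorb its fluctuation by a union bound with an $\exp(-\Theta(k))$ error that is comfortably smaller than $\lambda^k$ — here one must slightly adjust the effective threshold for $E_j$ to compensate, say requiring $E_j \ge 2d(1+\varepsilon)$, which changes $p$ to $e^{-2d(1+\varepsilon)}$, still tiny; (ii) deal with the discretization from $[n]$ to the continuum — since $n = \Omega(k^2)$ and the widths of interest are $\Theta(n/k) = \Theta(k)$, the rounding errors are negligible and can be handled by a direct comparison of the hypergeometric-type gap distribution to the Dirichlet one (alternatively, by a classical result that a random $m$-subset of $[n]$ can be coupled to the floors of $n$ times a uniform point in the simplex with small error); (iii) run the Chernoff bound on the lower tail of the resulting binomial and verify numerically that the exponent beats $\log(1/\lambda)$ with room to spare for the union-bound losses. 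Finally, translating "probability $1 - O(\lambda^k)$ over a uniform random $T$" into "all but $O(\lambda^k)\binom nk$ subsets $T$" is immediate since there are exactly $\binom nk$ subsets. The condition that $k$ be sufficiently large is used to make the $o(1)$ corrections from concentration and discretization small relative to the gap between the Chernoff exponent and $\log(1/\lambda)$.
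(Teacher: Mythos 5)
Your high-level strategy matches the paper's: pass to a Dirichlet/exponential model of the gaps of a random $k$-subset, show each width exceeds $dn/k$ with probability $p>2c$, and finish with a binomial Chernoff bound plus a concentration bound for the normalizing sum. But two concrete errors make the proposal, as written, incorrect.

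The more fundamental one is the claim that the odd-indexed coordinates $\{t_1,t_3,\dots,t_k\}$ of a uniformly random $k$-subset $T\subseteq[n]$ form a uniformly random size-$(k+1)/2$ subset of $[n]$. They do not. (Take $n=4$, $k=3$: the four possible $T$'s have odd-indexed subsets $\{1,3\},\{1,4\},\{1,4\},\{2,4\}$, which is not uniform over the six size-$2$ subsets.) Each width $b_i=t_{i+1}-t_{i-1}$ is the sum of two \emph{consecutive} gaps of the full subset, and under the exponential model this is a $\operatorname{Gamma}(2)$ variable, so $\Pr(b_i\ge dn/k)\approx e^{-d}(1+d)\approx 0.00258$, not the single-exponential $e^{-d/2}\approx 0.0167$ that your reduction would yield. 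Since the constants $c=0.00075$, $d=8.180$, $\lambda=0.999924$ have been tuned tightly against $e^{-d}(1+d)$, overestimating $p$ does not merely change the numerics — it produces a proof of a statement that is quantitatively stronger than what is actually true and therefore is not a valid argument. The paper sidesteps this entirely: it keeps all $k+1$ gaps of the full subset, represents them via i.i.d.\ $\mathrm{Exp}(1)$ variables $\xi_0,\dots,\xi_k$, and studies the sums $\xi_{i-1}+\xi_i$ for even $i$, exploiting that these sums involve pairwise disjoint pairs and hence are independent.

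There is also an arithmetic error that compounds the issue. Under your own normalization $g_j/n\approx E_j/m$ with $m\approx k/2$, the event $g_j\ge dn/k$ corresponds to $E_j\ge dm/k\approx d/2$, not $E_j\ge 2d$. The value $p=e^{-2d}\approx 8\times10^{-8}$ you record gives $mp\approx 4\times 10^{-8}k$, which is far \emph{below} $ck\approx 7.5\times10^{-4}k$; in that regime, having fewer than $ck$ large widths is the overwhelmingly likely outcome, not a lower-tail large deviation, so the Chernoff bound would go the wrong way. This should have been a sanity check that something was off. Finally, while you correctly flag the denominator concentration and the integer-rounding as issues needing care, the paper handles the latter by a specific stochastic-domination coupling ($a_i'=\lfloor X_i\rfloor$ is coordinatewise dominated by the discrete gaps), which is cleaner than a generic "classical result" appeal and is worth spelling out.
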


Before proving \cref{lem:widthbound}, we first show how to use it to deduce \cref{thm:original} via the encoding $\phi$. We return to \cref{lem:widthbound} in \cref{sec:widthboundproof}.

\begin{proof}[Proof of \cref{thm:original}]
First, note that we may assume that $n=\Omega(k^2)$ (since we can arbitrarily extend $\sigma$ to a longer permutation without destroying any of the patterns it contains). Now, using the assumption $n < (1.000076/e^2)k^2$, the fact that $1.000076< 1/0.999924$, the standard inequality $\binom n k\le (en/k)^k$, and Stirling's approximation, we can see that $\lambda^k \binom{n}{k} \le \lambda^k(en/k)^k\le (k/e)^k = o(k!)$. So, \cref{lem:widthbound} shows there are only $o(k!)$ possible outcomes of $\phi(\pi)$ arising from Case 1.

Next, we consider Case 2. Let $\Phi$ be the set of all outcomes of $\phi(\pi)$ obtainable from Case 2, so our objective is to show that $|\Phi|=o(k!)$. We first want to show that there are not many possibilities for the pair $(I,(t_i)_{i\notin I})$ in the definition of $\phi$. Intuitively, this should be the case because we specifically decided to ``forget'' the indices $t_i$ for which the width $t_{i+1}-t_{i-1}$ is large, and these indices are the most expensive to remember.

Note that our definition of the widths of a pattern $\pi$ only depended on the subset $T$ of indices where $\pi$ appeared. So, let $\mathcal F$ be the set of all size-$k$ subsets $T\subseteq[n]$ with at least $ck$ widths of size at least $dn/k$. For every $T=\{t_1,\dots,t_k\}\in \mathcal F$, we can choose a set $I$ of $ck$ even integers such that the widths $b_i$, for $i\in I$, are at least $dn/k$ (here $I$ only depends on the odd-numbered indices of $T$, just as in the definition of $\phi$). Let $\Psi$ be the set of all possibilities for the pair $(I,(t_i)_{i\notin I})$, among all $T\in \mathcal F$.

Now, note that we can encode the sets in $\mathcal F$ using the encoding $\psi(T)=(I,(t_i)_{i\notin I},(t_i)_{i\in I})$. Given any $(I,(t_i)_{i\notin I})\in \Psi$, the number of ways to extend this to a valid encoding $\psi(T)$ is at least
\[
\left(\frac{dn}{k} -  1\right)^{ck},\]
because for each $i\in I$ there are at least $dn/k-1$ possibilities for $t_i$, given $t_{i-1}$ and $t_{i+1}$ (here we are using the restriction that $I$ only consists of even-numbered indices). We deduce that $|\Psi|(dn/k -  1)^{ck}\le |\mathcal F|\le \binom{n}k$, so $|\Psi|\le \binom n k (dn/k -  1)^{-ck}$.

Given any pair $(I,(t_i)_{i\notin I}) \in \Psi$, the number of ways to choose $(\pi(i))_{i\in I}$ to extend our pair to an encoding $\phi(\pi)$ of some $\pi$ is at most ${k!}/{(k-ck)!}$. So,
\[
|\Phi|\le |\Psi| \frac{k!}{(k-ck)!}.
\]
Combining this with our upper bound for $|\Psi|$, we compute that $|\Phi|$ is bounded above by
\begin{align*}
    \frac{k!}{(k-ck)!} \left(\frac{dn}{k} - 1\right)^{-ck} \binom{n}{k}
    &\leq O(1)\,k!\left(\frac{1}{(k(1-c)/e)^{(1-c)k}} \left(\frac{dn}{k}\right)^{-ck} 
    \left(\frac{n e}{k}\right)^k\right)\\
    &= O(1)\,k!\left(\left(\frac{n}{(k^2/e^2)}\right)^{1-c}e^{c}  d^{-c}(1-c)^{-(1-c)} \right )^k=o(k!),
\end{align*}
as desired (recall that $n < 1.000076k^2/e^2$, $c = 0.00075$ and $d = 8.180$, and note that $(dn/k-1)^{-ck}/(dn/k)^{-ck}=(1-O(1/k))^{-ck}=O(1)$).  All in all, we have proved that our encoding $\phi$ can output only $o(k!)$ different values, so the number of distinct patterns in $\sigma$ must be $o(k!)$.

\end{proof}

\section{Large gaps in random sets} \label{sec:widthboundproof}
In this section we prove \cref{lem:widthbound}, showing that for all but $O(\lambda^k)\binom n k$ choices of $T$, at least $ck$ of the widths are at least $dn/k$ (where $d=8.180$, $c=0.00075$ and $\lambda=0.999924$). The details are a bit technical, but the intuition is fairly simple, as follows. If we choose $T$ randomly, then its elements approximately correspond to a Poisson point process with intensity $k/n$ in the interval $[0,n]$. So, the lengths of the gaps between the elements of $T$ are approximately exponentially distributed with mean $n/k$. Each of the widths is a sum of a pair of these gap lengths, so its distribution is approximately $\operatorname{Gamma}(2,k/n)$. The distribution function of $\operatorname{Gamma}(2,k/n)$ is $x\mapsto 1-e^{-x k/n}(x k/n+1)$ (for $x\ge 0$), so we expect about an $e^{-d}(d+1) > 2c$ fraction of the $(k-1)/2$ widths to have size at least $dn/k$.

\begin{proof}[Proof of \cref{lem:widthbound}]
Let $T$ be a uniformly random size-$k$ subset of $[n]$, and denote its elements by $t_1<\dots<t_k$ (so each $t_i$ is a random variable). We show that the probability that fewer than $ck$ widths are at least $dn/k$ is at most $O(\lambda^k)$.

Define the \textit{gaps} $a_0, a_1, \dots, a_k$ by $a_i = t_{i+1} - t_i$ (where $a_0 = t_1$ and $a_k = n+1-t_k$). Then the widths can be represented as $b_i=a_i+a_{i-1}$. Note that $(a_0, \dots, a_k)$ is uniformly distributed over all sequences of $k+1$ positive integers summing to $n+1$.

Let $(X_0, \dots, X_k)\in \mathbb R^{k+1}$ be a point uniformly distributed in the simplex given by
$$\Gamma = \{(x_0, \dots, x_k)\;:\; x_0, \dots, x_k \ge 0,\;x_0 + \dots + x_k = n + 1\}.$$
Let $a_i' = \lfloor X_i \rfloor$ be obtained by rounding down $X_i$, for each $0 \le i \le k$. Then $(a_1',\dots,a_k')$ is stochastically dominated by $(a_1,\dots,a_k)$, in the sense that the two sequences can be coupled such that we always have $a_i\ge a_i'$ for each $i$. Let $b_i'=a_i'+a_{i-1}'$, and let $B_i=X_i+X_{i-1}$. It suffices to show that with probability $1-O(\lambda^k)$, we have $B_i\ge dn/k+2$ (therefore $b_i'\ge dn/k$) for at least $ck$ different $i$.

Now, it is known that the distribution of $(X_0, \dots, X_k)$ is identical to the distribution of
$$\frac{n+1}{\xi_0 + \dots + \xi_k}(\xi_0, \dots, \xi_k),$$
where $\xi_i$ are i.i.d.\ exponential random variables with rate 1 (see for example \cite[Section~4.1]{Dirichlet}).
Let $d' = 8.282 > d$.
By a Chernoff bound for sums of exponential random variables (see for example \cite{chernoff}), we have 
\[
\Pr\left[\xi_0 + \dots + \xi_k \ge \frac{d'}{d}(k+1)\right] 
\le 
\exp\left({-(k+1)\left(\frac{d}{d'} - 1 -\ln\left(\frac{d}{d'}\right)\right)}\right) \le O(\lambda^k).
\]

Also, each of the $\xi_{i-1} + \xi_i$ (for even $i$) are i.i.d.\ with $\Pr(\xi_{i-1} + \xi_i \ge x) = e^{-x}(1+x)$. Take $d'' = 8.283$ slightly larger than $d'$, and let $p=e^{-d''}(1+d'')>2c$, so that
for each $i$ we have $\Pr(\xi_{i-1} + \xi_i\ge d'')\ge p$. Thus, by a Chernoff bound for the binomial distribution (see for example \cite[Theorem~A.1.13]{AlonSpencer}), the probability that $\xi_{i-1} + \xi_i \ge d''$ for fewer than $ck$ different $i$ is at most
\[\Pr[\Bin((k-1)/2, p) \le ck] \le \exp\left(\frac{-(p(k-1)/2-ck)^2}{2\cdot p(k-1)/2}\right)\le O(\lambda^k). 
\]
Combining the above two bounds, we conclude that with probability $1-O(\lambda^k)$, we have \[B_i=\frac{\xi_i+\xi_{i-1}}{\xi_1+\dots+\xi_k}\ge \frac{d''}{d'}\cdot \frac{d(n+1)}{k+1}\ge \frac{dn}k + 2,\]
for at least $ck$ different $i$ (using the assumptions that $k$ is large and $n=\Omega(k^2)$), as desired.
\end{proof}

\section{Lower-bounding the length of a universal sequence} \label{sec:k+1}

In this section we prove \cref{thm:k+1}, modulo a probabilistic lemma that will be proved in the next section. It actually suffices to prove the following seemingly weaker result, where we do not allow the alphabet size to be greater than $k$.
\begin{prop}
Suppose $\sigma \in [k]^n$, where $n < (1 + e^{-600})k^2/e$.
Then, for sufficiently large $k$, the number of $k$-patterns in $\sigma$ is at most $\exp(-e^{-600}k)\,k!$.
\label{prop:k}
\end{prop}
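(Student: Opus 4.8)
The plan is to adapt the encoding argument from the proof of \cref{thm:original} to the setting of sequences $\sigma\in[k]^n$.  Fix a pattern $\pi\in S_k$ contained in $\sigma$, together with indices $t_1<\dots<t_k$ where it appears; since $\pi$ has distinct entries, the values $\sigma(t_1),\dots,\sigma(t_k)$ are distinct elements of $[k]$, so in fact $\{\sigma(t_1),\dots,\sigma(t_k)\}=[k]$ and $\sigma(t_i)$ is exactly $\pi(i)$.  As before, for even $1<i<k$ let $b_i=t_{i+1}-t_{i-1}$ be the width around $t_i$.  We split into two cases: if fewer than $c'k$ of the widths are at least $d'n/k$ (for constants $c',d'$ to be chosen, with $d'$ a bit larger than the typical width $2n/k\approx 2k/e$), we encode $\pi$ by the raw index list $(t_1,\dots,t_k)$; otherwise we pick a canonical set $I$ of $c'k$ even indices with $b_i\ge d'n/k$, and encode $\pi$ as $\bigl(I,(t_i)_{i\notin I},(\pi(i))_{i\in I}\bigr)$.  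Injectivity is exactly as in \cref{sec:original}: knowing $I$ and $(\pi(i))_{i\in I}$ determines the \emph{set} of values $\{\pi(i):i\notin I\}$, and their relative order is recovered from the $\sigma(t_i)$ with $i\notin I$, since $\sigma(t_i)=\pi(i)$.

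Second, I would bound the number of outputs in each case.  For Case 1 I need an analogue of \cref{lem:widthbound}: for all but an exponentially small fraction of the $\binom{n}{k}$ size-$k$ subsets $T\subseteq[k]$---wait, $T\subseteq[n]$---at least $c'k$ of the widths are $\ge d'n/k$.  This is precisely the content of the probabilistic lemma promised for the next section, and its proof is the same Poisson/Gamma heuristic: a random $k$-subset of $[n]$ has gaps that behave like i.i.d.\ $\operatorname{Exp}(k/n)$ variables, each width is a sum of two consecutive gaps, hence approximately $\operatorname{Gamma}(2,k/n)$, and since $n/k\approx k/e$ one chooses $d'$ so that $e^{-d'/((n/k))\cdot(n/k)}$---more simply, so that $e^{-d'k/n}(d'k/n+1)$ times $(k-1)/2$ comfortably exceeds $c'k$; a Chernoff bound then gives the exponentially small exceptional probability, say $O(\rho^k)$ with $\rho<1$.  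Together with $\binom nk\le (en/k)^k$ and $n<(1+e^{-600})k^2/e$, the Case 1 count is at most $\rho^k(en/k)^k$; choosing $d'$ (hence $\rho$) appropriately makes this at most $\exp(-e^{-600}k)k!$, using $k!\approx(k/e)^k\sqrt{2\pi k}$ and $(en/k)^k\approx(k/e)^k(1+e^{-600})^k e^k/ k^{?}$---the key point is that $(en/k)^k/k!\approx e^k(n/k\cdot e/k)^k\cdot(\dots)$ is only $e^{(1+o(1))k}$, so any fixed $\rho<1$ with $\rho e<1$ wins; but here $en/k\approx e k/e=k$ so $(en/k)^k/k!\approx e^k\sqrt{\phantom{k}}^{-1}$, which is NOT small, so I actually need $\rho<1/e$, i.e.\ $d'$ must be taken large enough that the Chernoff exponent beats $1$.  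That is the delicate arithmetic to get right.

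Third, Case 2: exactly as in \cref{sec:original}, let $\Psi$ be the set of pairs $(I,(t_i)_{i\notin I})$ arising from subsets $T$ with many wide widths.  Since each omitted $t_i$ (with $i\in I$ even) has $\ge d'n/k-1$ choices given $t_{i-1},t_{i+1}$, we get $|\Psi|\le\binom nk(d'n/k-1)^{-c'k}$, and then $|\Phi_2|\le|\Psi|\cdot k!/(k-c'k)!$.  Plugging in $\binom nk\le(en/k)^k$, $n\le(1+e^{-600})k^2/e$, Stirling for $k!/(k-c'k)!\approx k^{c'k}(1-c')^{-(1-c')k}e^{-c'k}$, and collecting, the bound becomes $O(1)\,k!\bigl((en/(k^2/e))^{1-?}\,(\dots)\bigr)^k$; since $en/(k\cdot k/e)=e^2n/k^2\approx e(1+e^{-600})>1$ here (the ``trivial'' exponent base is $e n/k^2\cdot e = e^2 n /k^2$, and with $n\approx k^2/e$ this is $\approx e$, not $1$), I have more room than in the permutation case, and a suitable choice of $c',d'$ (with $d'$ large, $c'$ tiny) drives the base strictly below $1$, yielding $|\Phi_2|\le\exp(-e^{-600}k)k!$ as well.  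Adding the two cases proves \cref{prop:k}.

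The main obstacle, as flagged above, is that unlike the permutation problem---where the target length is $k^2/e^2$ and the trivial bound is already tight up to the tiny constant we are fighting over---here the target $n\approx k^2/e$ sits a constant factor above the naive counting threshold only for sequences, so the arithmetic bookkeeping between $(en/k)^k$, $k!$, and the savings factors $(d'n/k)^{-c'k}$ is less forgiving: one must choose $d'$ large enough that \emph{both} the Case 1 Chernoff exponent exceeds $\ln(en/(k/e))=\ln(e^2n/k^2)$ and the Case 2 base drops below $1$, while keeping $c'$ small enough that $e^{-d'}(1+d')>2c'$ so the width lemma still holds.  Verifying that the stated constants ($e^{-600}$, $e^{-1000}$) leave enough slack---and handling the reduction from alphabet $[(1+e^{-1000})k]$ down to $[k]$, which costs only a $\binom{(1+e^{-1000})k}{k}=e^{O(e^{-1000}k\log(1/e^{-1000}))}$ factor absorbed into the $\exp(-e^{-600}k)$---is the only genuinely fiddly part; the structural argument is a direct transcription of \cref{sec:original}.
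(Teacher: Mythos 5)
Your approach has a fundamental gap that cannot be patched by adjusting constants. You propose to encode a pattern by its index set $T=\{t_1<\dots<t_k\}\subseteq[n]$ and then shave a factor off $\binom{n}{k}$ using the width trick. But here $n\approx k^2/e$, so $\binom{n}{k}\approx(en/k)^k\approx k^k$, while $k!\approx(k/e)^k$; the ratio is $e^{k(1+o(1))}$. The width encoding can only ever recover a factor that is at most $e^{O(k\cdot p)}$ where $p=e^{-d'}(1+d')\le 1$ is the probability that a single width exceeds $d'n/k$, and it can never come close to $e^k$. Concretely: in Case 1 you need $\Pr[\text{Bin}((k-1)/2,p)<c'k]\ll e^{-k}$, but even $\Pr[\text{Bin}((k-1)/2,p)=0]=(1-p)^{(k-1)/2}\ge e^{-k/2}$ since $p\le 1$, so the exceptional probability is never smaller than $e^{-k/2}$ and Case 1 is lost. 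In Case 2, the requirement reduces to $c'\ln d'+(1-c')\ln(1-c')>1$, hence $c'\ln d'>1$; but the width lemma forces $c'<e^{-d'}(1+d')/2$, so $c'\ln d'\le\tfrac12 e^{-d'}(1+d')\ln d'$, whose supremum over $d'>0$ is well under $1$. Both cases fail by a wide margin, and the "delicate arithmetic" you flagged is in fact an impossibility.

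The underlying issue is that the index-set encoding $(t_1,\dots,t_k)$ already wastes an $e^k$ factor in the sequence setting, because only a $\approx e^{-k}$ fraction of $k$-subsets of $[n]$ are "rainbow" (one index per symbol). The correct baseline to improve upon is $a_1\cdots a_k\le(n/k)^k$, not $\binom{n}{k}$, and the paper's proof does exactly this: it indexes by value, choosing $t_i$ with $\sigma(t_i)=i$, so that the trivial encoding already costs $a_1\cdots a_k$. It then replaces the last $ck$ indices $t_m$ with their \emph{relative position} $\psi(m)$ among the earlier $t_i$'s, and the savings come from a structural fact (Lemmas~\ref{lem:good}--\ref{lem:expsplits}) about "$m$-gaps": one can relabel the symbols so that for the last $ck$ values of $m$, at most $a_m(1-e^{-280})$ of the $m$-gaps are ``split'' by the earlier $t_i$'s, for all but an exponentially small fraction of choices of $(t_1,\dots,t_{k-ck})$. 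The concentration ingredient is Azuma--Hoeffding for the number of splits, not a Chernoff bound for widths of a random subset. So this is a genuinely different argument from \cref{sec:original}, not a transcription of it; you will need the gap/split machinery rather than the width machinery.
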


Before proving \cref{prop:k}, we deduce \cref{thm:k+1} from it.

\begin{proof}[Proof of \cref{thm:k+1}]

Suppose $k$ is sufficiently large, and let $t=e^{-1000}k$. Consider $\sigma \in [k+t]^n$. We call the elements of $[k+t]$ \emph{symbols}. Every pattern $\pi \in S_k$ that appears in $\sigma$ uses some set $Y$ of $k$ symbols, and therefore it appears in the subsequence $\sigma_Y\in Y^{n'}$ of $\sigma$ obtained by keeping only the symbols in $Y$.

Now, by \cref{prop:k}, for each of the $\binom{k+t}{k}$ choices of $Y$, there are at most $\exp(-e^{-600}k) k!$ patterns in $\sigma_Y$. Thus, the total number of patterns contained in $\sigma$ is at most
\[
    \binom{k+t}{k}\exp(-e^{-600}k)\,k!
    \leq \exp(-e^{-600}k) \frac{(k+t)^{k+t}}{k^k t^t}\,k!.
\]

Recalling that $t / k = e^{-1000}$, one can verify that this expression is $o(k!)$, as desired.
\end{proof}

Now we proceed to the proof of \cref{prop:k}. Let $\sigma\in [k]^n$ be a sequence of length $n<(1 + e^{-600})k^2/e$, and let $a_1, \dots, a_k$ be the numbers of occurrences of the symbols $1, \dots, k$ in $\sigma$. Recall from the introduction that we have a trivial upper bound of $a_1a_2 \dots a_k$ on the number of patterns $\pi\in S_k$ that appear in $\sigma$; we would like to improve on this by designing an efficient encoding scheme $\phi$ for the patterns that appear in $\sigma$.

Let $\pi \in S_k$ be a pattern that is contained in $\sigma$, and consider indices $t_1, \dots, t_k$ that represent $\pi$ in $\sigma$, such that $\sigma(t_i)=i$ for each $i$. (This is different from the proof of \cref{thm:original} where $(t_1,\dots,t_k)$ was an increasing sequence of indices, but note that $(t_1, \dots, t_k)$ still uniquely determines $\pi$.) Let $c = e^{-290}$. Now, we encode $\pi$ as follows: first, we specify $t_1, \dots, t_{k-ck}$. Then, instead of specifying $t_m$ for $k - ck < m \le k$, we will instead specify the relative position of $t_m$, with respect to all the other $t_i$. Formally, we let $\psi(m)$ be the binary vector $(\boldsymbol{1}_{t_m<t_i})_{i< m}$ indicating the position of $t_m$ relative to all $t_i$ with $i<m$, and define $\phi(\pi)$ to be the pair $\left((t_i)_{i\le k-ck},(\psi(m))_{m>k-ck}\right)$. This encoding is injective.

To see why this encoding should be more efficient than the trivial one, it is helpful to consider the extreme case where we first specify $t_1, \dots, t_{k-1}$. Typically, the occurrences of the last symbol $k$ are not going to be perfectly distributed between the $t_i$, and there are multiple possible outcomes of $t_k$ that have the same position $\psi(k)$ relative to the other $t_i$. That is to say, specifying the relative position $\psi(k)$ of $t_k$ should be cheaper than specifying $t_k$ exactly.

To make precise the above intuition, we need to make some definitions. Fix $k - ck < m \le k$, and suppose we have specified indices $t_1,\dots,t_{k-ck}$ that divide the interval $[n]$ into $k-ck+1$ disjoint subintervals. Suppose that in $\sigma$, the symbol $m$ appears in positions $s_1< \dots< s_{a_m}$. Then the number of possible positions to place $t_m$ relative to $t_1, \dots, t_{k-ck}$ equals the number of subintervals that contain some $s_j$. Equivalently, this number of possible positions is one more than the number of adjacent pairs $s_j, s_{j+1}$ that are ``split'' by some $t_i$ between them. In this case, we say that the pair $s_j, s_{j+1}$ constitutes an \textit{$m$-split with respect to $t_i$}. Our main technical ingredient is an upper bound on the number of $m$-splits for $k-ck < m \le k$. We also give ourselves the freedom to permute the symbols in $\sigma$, which allows us to choose the most convenient $ck$ symbols as the ``last ones.''

\begin{lem} \label{lem:expsplits}
Consider $\sigma \in [k]^n$ with $n < (1 + e^{-600})k^2/e$. Either the conclusion of \cref{prop:k} holds for trivial reasons, or else it is possible to permute the symbols of $\sigma$ such that the following two conditions hold.
\begin{enumerate}[(1)]
    \item For all $k - ck < m \le k$, we have $a_m \ge 0.1k$.
    \item For all but $k^{O(1)} \exp(-e^{-560}k)\,a_1 \dots a_{k - ck}$ choices of $t_1, \dots, t_{k-ck}$ (where each $\sigma(t_i)=i$), the following key property holds: For each $m>k-ck$, the total number of $m$-splits with respect to $t_1, \dots, t_{k-ck}$ is at most $a_m (1-e^{-280})$.
\end{enumerate}

\end{lem}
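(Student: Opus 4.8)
The plan is to first dispose of two "trivial" cases so that we may assume the symbol counts $a_1,\dots,a_k$ are all reasonably balanced, and then to analyze the number of $m$-splits via a random-set argument analogous to the one used for \cref{lem:widthbound}. For the trivial cases: if many symbols occur very few times, say more than $ck/2$ of the symbols $m$ have $a_m < 0.1k$, then we simply bound the number of patterns by $\prod a_m$. Since $\sum a_m = n = O(k^2)$, making many of the $a_m$ small forces the remaining ones to be no larger than $O(k)$ on average, and by convexity $\prod a_m$ is maximized when the mass is spread as evenly as possible; a direct computation should show $\prod a_m \le \exp(-e^{-600}k)\,k!$ in this regime, so the conclusion of \cref{prop:k} holds for trivial reasons. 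Otherwise, at least $k - ck/2$ of the symbols satisfy $a_m \ge 0.1k$, and we may permute the symbols of $\sigma$ so that the $ck$ "last" symbols (those with $k - ck < m \le k$) are all among this large-count set; this gives condition (1). (One should double-check here that $ck \le k - ck/2$, which holds since $c = e^{-290}$ is tiny.)

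For condition (2), fix $m > k - ck$ and think of choosing $t_1,\dots,t_{k-ck}$ as a two-stage process: first place the indices $t_i$ uniformly at random (with $\sigma(t_i) = i$), then count the number of $m$-splits. The key heuristic, exactly as in \cref{sec:widthboundproof}, is that the positions of symbol $m$ in $\sigma$ (there are $a_m \ge 0.1k$ of them) and the positions of the indices $t_1,\dots,t_{k-ck}$ (there are $k - ck \approx k$ of them), when both "viewed" inside $[n]$ with $n \approx k^2/e$, behave like two independent Poisson processes of intensities roughly $a_m/n$ and $k/n$. The gaps between consecutive occurrences $s_j, s_{j+1}$ of symbol $m$ are then approximately i.i.d.\ exponential with mean $n/a_m$, and such a gap fails to be an $m$-split precisely when no $t_i$ falls in it, which happens with probability about $e^{-(k/n)\cdot(\text{gap length})}$; integrating against the exponential gap distribution gives a probability $a_m/(a_m + k) = \Theta(1)$ that a given gap is \emph{not} split, hence an expected number of non-split gaps that is a constant fraction of $a_m$. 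Thus the expected number of $m$-splits is $a_m \cdot (1 - \Omega(1))$, comfortably below $a_m(1 - e^{-280})$, and a Chernoff/Azuma concentration bound (the number of $m$-splits is a sum of weakly-dependent indicators, or one can condition appropriately to get genuine independence as in the $B_i$ analysis of \cref{lem:widthbound}) shows the deviation above $a_m(1 - e^{-280})$ has probability $\exp(-\Omega(k))$ — small enough to absorb into the claimed $k^{O(1)}\exp(-e^{-560}k)$ factor after a union bound over the $ck \le k$ choices of $m$. Translating "a random choice of $(t_i)$ is bad with probability $p$" into "at most $p \cdot a_1\cdots a_{k-ck}$ choices are bad" is immediate since $(t_i)_{i \le k - ck}$ ranges over a set of size $\prod_{i \le k-ck} a_i$.

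The main obstacle I anticipate is making the Poisson-process comparison rigorous with the \emph{right} constants. Unlike \cref{lem:widthbound}, here two different random objects interact (the fixed-but-arbitrary positions of symbol $m$ in $\sigma$, and the random $t_i$), and the positions of symbol $m$ are \emph{not} random — they are an adversarial subset of $[n]$ of size $a_m$. So the clean statement is: for \emph{any} fixed set $S$ of $a_m$ positions, if $k - ck$ positions $t_i$ are chosen at random, then with probability $1 - \exp(-\Omega(k))$ the number of "$S$-gaps hit by some $t_i$" is at most $a_m(1 - e^{-280})$. The worst case for the adversary is to bunch the elements of $S$ together so that few gaps are "large", but even then, since $a_m \ge 0.1k$ and $n \approx k^2/e$, the gaps have average length $\approx n/a_m = \Theta(k)$, which is the same order as the typical spacing $n/k = \Theta(k)$ of the $t_i$, so a constant fraction of the $S$-mass inevitably sits in gaps that survive; one needs a careful extremal/convexity argument (or a direct second-moment or Chernoff computation conditioning on how the $t_i$ distribute among the $S$-gaps) to pin this down and verify the bound $1 - e^{-280}$ with room to spare. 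Getting the concentration exponent to beat $e^{-560}k$ (so the union bound over $m$ and the outer argument in \cref{prop:k} go through) is where the careful bookkeeping lives, but conceptually it is the same exponential-tail phenomenon exploited in \cref{sec:widthboundproof}.
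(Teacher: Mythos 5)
Your proposal follows the paper's high-level template (trivial case for unbalanced symbol counts, then a concentration argument for the splits), but it contains a conceptual error that the paper's proof is specifically built to circumvent, and without fixing it the argument does not go through.

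The problem is the Poisson-process heuristic for the $t_i$. You model the positions $t_1,\dots,t_{k-ck}$ as roughly a Poisson process of intensity $k/n$ in $[n]$, so that the probability a given $m$-gap escapes being hit is about $e^{-(k/n)\cdot(\text{gap length})}$. But each $t_i$ is constrained to lie among the positions of symbol $i$ in $\sigma$, and these positions can be adversarially clustered. The probability that $t_i$ lands in a given $m$-gap is not governed by the gap's length at all --- it equals $b_{ij}/a_i$, the \emph{fraction} of symbol $i$'s occurrences inside that gap. Concretely, if some gap contains $90\%$ of all occurrences of symbol $i$ in $\sigma$, then $t_i$ lands in that gap with probability $0.9$ no matter how short the gap is. Consequently your ``clean statement'' --- that for \emph{any} fixed set $S$ of $a_m$ positions, a random choice of $(t_i)$ splits at most $a_m(1-e^{-280})$ of the $S$-gaps with high probability --- is false. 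The adversary can build $\sigma$ so that each $m$-gap is a ``reservoir'' containing $\ge 0.9 a_{m'}$ occurrences of some dedicated distinct symbol $m'$; then each $m$-gap gets hit with probability $\ge 0.9$ independently, and the number of $m$-splits concentrates near $a_m$, not near a constant fraction of $a_m$. Since there are roughly $a_m \approx 0.1k$ gaps and $k$ symbols available to dedicate, this is genuinely achievable for some $m$.

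What the paper does, and what your proposal is missing, is a purely combinatorial structural lemma (their \cref{lem:fullgaps}) asserting that this pathology cannot occur for \emph{all} large $m$: calling an $m$-gap ``full'' if some $m'$ has $\ge 0.9a_{m'}$ of its occurrences inside it, they show that at least $0.03k$ common symbols $m$ have fewer than $0.9a_m$ full gaps. The argument is a double-counting/nesting one --- each $m'$ fills at most one $m$-gap, and full gaps nested inside full gaps are scarce --- and has no Poisson-process analogue. Only after permuting symbols so the last $ck$ of them are ``good'' in this sense does the expectation bound work: for a non-full gap one has $b_{ij}/a_i \le 0.9$ for every $i$, which lets them lower-bound $\prod_i(1-b_{ij}/a_i) \ge \exp(-2.6\sum_i b_{ij}/a_i)$ and push through convexity plus $\sum_j b_{ij}\le a_i$. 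The concentration step at the end (Azuma--Hoeffding, since changing one $t_i$ changes the split count by at most one) is the part your sketch gets right, but it sits on top of the expectation bound, which in turn requires the full-gap machinery you have not supplied.
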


Since the proof of \cref{lem:expsplits} is rather technical, we defer it to the end of this section after deducing \cref{prop:k}.

\begin{proof}[Proof of \cref{prop:k}]
We will assume that the symbols of $\sigma$ have been permuted to satisfy \cref{lem:expsplits}. Our main goal is to show that there are at most $k^{O(1)} \exp(-e^{-580}k) a_1 \dots a_k$ possible outcomes of the encoding $\phi(\pi)$; we will then be able to deduce the desired bound by convexity.

Consider a pattern $\pi\in S_k$ appearing in $\sigma$ at indices $t_1,\dots,t_k$. First, we deal with the case where $t_1,\dots,t_{k-ck}$ do not satisfy the key property in \cref{lem:expsplits}. There are at most $\exp(-e^{-560}k) a_1 \dots a_{k-ck}$ such possibilities for $t_1,\dots,t_{k-ck}$. Then, we just trivially observe that there are at most $a_m$ ways to choose $t_m$, for all $m>k-ck$. So, the total number of possibilities for $\phi(\pi)$ among such $\pi$ is at most $k^{O(1)} \exp(-e^{-560}k) a_1 \dots a_{k}$, which is substantially less than our target.

Next, we consider the case where $t_1,\dots,t_{k-ck}$ do satisfy our key property. This implies that for each $m>k-ck$, there are at most $a_m (1-e^{-280})+1$ distinct possibilities for the position of $t_m$ relative to $t_1,\dots,t_{k-ck}$. If we additionally want to specify the position of $t_m$ relative to $t_{k-ck+1},\dots,t_{m-1}$, there are at most $m-(k-ck+1)$ additional ways to make this choice. So, using condition (1) of \cref{lem:expsplits}, and recalling that $c = e^{-290}$, the total number of possibilities for the relative position $\psi(m)$ of $t_m$ is at most
\begin{align*}
a_m(1 - e^{-280}) + m - (k-ck) &\le a_m(1 - e^{-280}) + ck \\
&\le a_m(1 - e^{-280} + 10c) \\
&< a_m(1 - e^{-290}).
\end{align*}
Thus, since there are at most $a_i$ ways to pick $t_i$ for each $i \le k-ck$, the number of possibilities for $\phi(\pi)$ among $\pi$ satisfying our key property is at most
\begin{align*}
\left( \prod_{i=1}^{k-ck} a_i \right) \left( \prod_{m>k-ck} a_m(1-e^{-290}) \right) 
&= (1-e^{-290})^{ck} a_1 \dots a_k \\
&\le \exp(-e^{-290}ck) a_1 \dots a_k \\
&\le \exp(-e^{-580}k) a_1 \dots a_k.
\end{align*}
All in all, accounting for both cases (when the key property is satisfied, and when it is not), the total number of possibilities for $\phi(\pi)$ is at most
\[\exp(-e^{-580}k) a_1 \dots a_k + k^{O(1)} \exp(-e^{-560}k) a_1 \dots a_k \le k^{O(1)} \exp(-e^{-580}k) a_1 \dots a_k.\]
We have $a_1 + \dots + a_k = n$, so by convexity, the number of possibilities for $\pi$ is at most

\[
k^{O(1)} \exp(-e^{-580}k) \left(\frac nk \right)^k 
= k^{O(1)} \left(\exp(-e^{-580}) \frac{n}{k^2/e}\right)^k k!\le \exp(-e^{-600}k) k!,
\]

for sufficiently large $k$, as desired. This completes the proof of \cref{prop:k}.
\end{proof}

\section{Few splits for random indices}
In this section we prove \cref{lem:expsplits}. First, we want to be able to assume that most symbols occur fairly often. Call a symbol $m$ \textit{common} if $a_m > 0.1k$.

\begin{lem}\label{lem:good}
Consider $\sigma \in [k]^n$ with $n < (1 + e^{-600})k^2/e$. Either the conclusion of \cref{prop:k} holds for trivial reasons, or else at least $0.99k$ symbols are common.
\end{lem}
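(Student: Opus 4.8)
The plan is to argue by contradiction: suppose fewer than $0.99k$ symbols are common, i.e. at least $0.01k$ symbols $m$ satisfy $a_m \le 0.1k$. We want to show that in this situation the trivial bound $a_1\cdots a_k$ on the number of patterns is already small enough that the conclusion of \cref{prop:k} holds outright. The key point is that the product $a_1\cdots a_k$ is maximized, subject to $\sum_m a_m = n$, when the $a_m$ are as equal as possible; forcing $0.01k$ of the $a_m$ to be at most $0.1k$ is a severe constraint that drives the product well below $(n/k)^k$.

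Concretely, I would reason as follows. Since the number of patterns in $\sigma$ is at most $a_1\cdots a_k$, it suffices to show $a_1\cdots a_k \le \exp(-e^{-600}k)\,k!$ whenever at least $0.01k$ of the $a_m$ are $\le 0.1k$. Let $S$ be a set of exactly $0.01k$ indices with $a_m \le 0.1k$ for $m\in S$, and let the remaining $0.99k$ values have sum at most $n$. By AM--GM, $\prod_{m\notin S} a_m \le \big(\tfrac{n}{0.99k}\big)^{0.99k}$, while $\prod_{m\in S} a_m \le (0.1k)^{0.01k}$. Hence
\[
a_1\cdots a_k \;\le\; (0.1k)^{0.01k}\left(\frac{n}{0.99k}\right)^{0.99k}
\;=\; k^k\left(0.1^{0.01}\left(\frac{n/k^2}{0.99}\right)^{0.99}\right)^{k}.
\]
Now use $n < (1+e^{-600})k^2/e$, so $n/k^2 < (1+e^{-600})/e$, and compare against $k!\ge (k/e)^k$ (Stirling); we need the bracketed quantity to be at most $e^{-1}\exp(-e^{-600})$, i.e.
\[
0.1^{0.01}\left(\frac{1+e^{-600}}{0.99\,e}\right)^{0.99} \le e^{-1}\exp(-e^{-600}).
\]
This is a numerical inequality: since $0.1^{0.01}=10^{-0.01}\approx 0.9772$ and $(1/0.99)^{0.99}\approx 1.0102$, the left side is roughly $0.9772\cdot 1.0102\cdot e^{-0.99}\approx 0.987\,e^{-0.99}\approx 0.366$, comfortably below $e^{-1}\approx 0.368$ with room to absorb the tiny $e^{-600}$ corrections, so the inequality holds for large $k$. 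Therefore $a_1\cdots a_k \le \exp(-e^{-600}k)\,k!$ and the conclusion of \cref{prop:k} holds for trivial reasons in this case; otherwise at least $0.99k$ symbols are common.

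The only real obstacle is bookkeeping the constants carefully enough that the slack $0.987 < e^{-e^{-600}}\cdot(e^{-1}\cdot e) = 1/e^{e^{-600}}\cdots$ — more precisely, checking that the constant factor $0.1^{0.01}\cdot(1.0102)^{\phantom{0}}\approx 0.987$ times $e^{-0.99}$ really does stay below $e^{-1}\exp(-e^{-600})$; since $e^{-0.99}/e^{-1}=e^{0.01}\approx 1.01005$ and $0.987\cdot 1.01005\approx 0.9969 < 1$, there is a fixed positive gap, so the $e^{-600}$-scale corrections are harmless and a polynomial-in-$k$ loss (if one tracks floors or the exact value of $|S|$) is likewise absorbed. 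No probabilistic input is needed here; this is purely a convexity/AM--GM estimate.
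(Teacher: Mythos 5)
Your proposal is correct and matches the paper's own argument essentially step for step: both assume (for contradiction) that $0.01k$ symbols have $a_m\le 0.1k$, apply AM--GM to the remaining factors, arrive at $(0.1k)^{0.01k}(n/(0.99k))^{0.99k}$, and verify the resulting numerical constant falls below $e^{-1}\exp(-e^{-600})$ using $k!\ge(k/e)^k$. Your direct numerical check that $0.1^{0.01}\cdot 0.99^{-0.99}\cdot e^{0.01}\approx 0.997<1$ is the cleanest way to see the slack, and is exactly the bound the paper's chain of inequalities is aiming for.
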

\begin{proof}
Suppose without loss of generality that $1,\dots,0.01k$ are all not common. Then the number of $k$-patterns in $\sigma$ is at most \[a_1\cdots a_k \leq \left(0.1k\right)^{0.01k} a_{0.01k+1}\cdots a_k\leq (0.1k)^{0.01k}\left(\frac{n}{0.99k}\right)^{0.99k},\]
by convexity, since $a_{0.01k+1}+\dots+a_k \leq n$. 
We can rewrite this latter expression as
\begin{align*}
    \left(\frac{0.1^{0.01}}{0.99^{0.99}}\right)^k k^k \left(\frac{n}{k^2}\right)^{0.99k} \leq k^k \left(\frac{0.988n}{k^2}\right)^k \leq  \left(\frac{0.988n}{k^2/e}\right)^k k!\le \exp(-e^{-600}k)\,k!,
\end{align*} 
so the conclusion of \cref{prop:k} holds.
\end{proof}

Next we will need a structural lemma. Suppose that in $\sigma$, the symbol $m$ appears in positions $s_1< \dots< s_{a_m}$. An \emph{$m$-gap} is the (possibly empty) interval between a pair of adjacent indices $s_j, s_{j+1}$. We say that an $m$-gap is \textit{full} if there exists some symbol $m' \neq m$ for which the gap contains at least $0.9a_{m'}$ occurrences of $m'$. In this case we say that the $m$-gap is \textit{filled} by $m'$. See \cref{fig:fullgapsex} for an example illustrating full gaps.

\begin{lem} \label{lem:fullgaps}
Consider $\sigma \in [k]^n$ with $n < (1 + e^{-600})k^2/e$, and suppose that at least $0.99k$ symbols are common. Then there are at least $0.03k$ common $m$ for which the number of full $m$-gaps is less than $0.9a_m$. 
\end{lem}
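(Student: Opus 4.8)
The plan is a proof by contradiction. Suppose that fewer than $0.03k$ common symbols $m$ have fewer than $0.9a_m$ full $m$-gaps, and call a common symbol \emph{dispersed} if at least $0.9a_m$ of its gaps are full; then more than $0.96k$ symbols are dispersed, so in particular there is at least one. Since every $k$-pattern of $\sigma$ uses all $k$ distinct symbols, we may assume that each symbol of $[k]$ occurs in $\sigma$, so $a_m\ge 1$ for all $m$. For a common symbol $m$ with occurrences at positions $o_1<\dots<o_{a_m}$, let its \emph{core} $C_m$ be the interval of positions from $o_{\lfloor 0.1a_m\rfloor+1}$ to $o_{\lceil 0.9a_m\rceil}$; it contains between $0.8a_m$ and $a_m$ occurrences of $m$, so $|C_m|\ge 0.8a_m>0.08k$. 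Call a gap of $m$ a \emph{middle gap} if it lies between two of those core occurrences; each middle gap is contained in $C_m$, and since $m$ has at most $0.2a_m$ non-middle gaps, a dispersed $m$ has at least $0.7a_m$ full middle gaps.

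First observe that a symbol $m'$ can fill at most one $m$-gap: filling an $m$-gap requires at least $\lceil 0.9a_{m'}\rceil>a_{m'}/2$ occurrences of $m'$ in it, and distinct $m$-gaps contain disjoint sets of occurrences of $m'$. Hence, if we designate one filler for each full gap of a dispersed symbol $m$, these designated fillers are pairwise distinct; as there are at least $0.7a_m$ full middle gaps and at most $0.01k$ symbols are uncommon in total, at least $0.7a_m-0.01k\ge 0.06k$ middle gaps of $m$ have a \emph{common} designated filler, and we call those common symbols the \emph{children} of $m$. If $m'$ is a child of $m$ filling the middle gap $g$, then the same sort of counting shows $C_{m'}\subseteq g$, and hence $C_{m'}\subseteq C_m$; and since distinct middle gaps of $m$ are disjoint, the cores of the children of $m$ are pairwise-disjoint subsets of $C_m$. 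As $m$ has at least $0.06k\ge 2$ children whose cores are disjoint nonempty subsets of $C_m$, each child $m'$ has $C_{m'}$ a \emph{proper} subset of $C_m$, so $|C_{m'}|<|C_m|$.

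The crux is that ``dispersed'' propagates to children: fewer than $0.03k$ common symbols are non-dispersed, while every dispersed symbol has at least $0.06k$ children, all common, so every dispersed symbol has a dispersed child, whose core is strictly smaller than its own. Taking $m_0$ to be a dispersed symbol with $|C_{m_0}|$ minimum then yields a contradiction. (If one prefers to avoid the extremal choice, one can instead apply the previous paragraph twice: this exhibits more than $(0.03k)(0.06k)$ distinct common symbols whose cores are pairwise disjoint and contained in $C_{m_0}$, each of size greater than $0.08k$, which forces $|C_{m_0}|=\Omega(k^3)$ and contradicts $|C_{m_0}|\le n=O(k^2)$ once $k$ is large.)

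The only real idea is the structural fact that a dispersed symbol's core contains the pairwise-disjoint cores of many other symbols, of which — since non-dispersed symbols are scarce — at least one is again dispersed. The rest is bookkeeping: chiefly checking that ``$m'$ fills $g$'' forces $C_{m'}\subseteq g$, and keeping track of floors in ``at least $0.7a_m$ full middle gaps'' and ``at least $0.06k$ common children''. So the main obstacle is simply making those routine estimates clean.
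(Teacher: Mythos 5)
Your proof is correct, but it is genuinely different in structure from the paper's. Both proofs rest on the same two facts—each symbol $m'$ can fill at most one $m$-gap (since filling demands more than half of $m'$'s occurrences), and gaps nested inside a single gap of another symbol are pairwise disjoint—but the way they are exploited differs. The paper argues directly: if some common $m$ has $\ge 0.9a_m$ full gaps, take $0.08k$ common fillers $m'$, observe that all the $m'$-gaps sitting inside the chosen $m$-gaps are pairwise disjoint so at most $k$ of them can be full, and conclude that at least $0.03k$ of the $m'$ have at most $20$ full gaps inside plus at most $0.1a_{m'}$ outside, which is $< 0.9a_{m'}$. So the paper exhibits the required $0.03k$ symbols explicitly by descending exactly two levels and counting. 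You instead run an extremal/descent argument: introduce the ``core'' $C_m$ (a percentile-trimmed interval), show that a filler's core sits strictly inside the filled gap (which, as you note, is the only place one has to be slightly careful with the floor/ceiling bookkeeping), deduce that a ``dispersed'' symbol has $\ge 0.06k$ common children with strictly smaller cores, observe that not all can be non-dispersed since $< 0.03k$ common symbols are non-dispersed, and kill the minimal dispersed core. This is arguably more conceptual and self-propagating, at the small cost of needing the core definition and the containment $C_{m'}\subsetneq C_m$; your fallback volume argument (grandchildren give $\Omega(k^2)$ disjoint cores of size $\Omega(k)$, exceeding $n=O(k^2)$) is also valid, though it needs $k$ large, whereas your primary minimal-core argument does not. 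Neither your proof nor the paper's actually uses the hypothesis $n<(1+e^{-600})k^2/e$; only the lower bound on the number of common symbols is needed.
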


\begin{figure}[h]
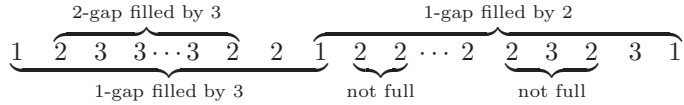

    \centering
    \[\lefteqn{\underbrace{\phantom{1\quad1\quad1\quad1\quad1\quad1\quad11\quad11}}_{\text{1-gap filled by 3}}}1\quad\overbrace{2\quad3\quad 3\cdots3\quad2}^{\text{2-gap filled by 3}}\quad2\quad\overbrace{1\;\; \underbrace{2\quad 2}_{\text{not full}}\cdots \; 2\quad \underbrace{2\quad3\quad2}_{\text{not full}}\quad 3\quad1}^{\text{1-gap filled by 2}}\] 
    \caption{An example showing gaps that are full and not full}
    \label{fig:fullgapsex}
\end{figure}

\begin{proof}
Consider some common $m$ for which the number of full $m$-gaps is at least $0.9a_m > 0.09k$ (if there is no such $m$ then we are immediately done). Note that each $m'$ can only fill at most one $m$-gap, so at least $0.9a_m - 0.01k > 0.08k$ of these full $m$-gaps are filled by different common symbols. Let $S$ be a set of $0.08k$ common symbols $m'$ that each fill a different $m$-gap.

Now, for any $m'\in S$, inside the $m$-gap filled by $m'$, there are at least $0.9a_{m'}$ instances of $m'$. These instances themselves form at least $0.9a_{m'} - 1$ different $m'$-gaps; denote the set of such $m'$-gaps by $G_{m'}$. Now, note that all the different gaps in $\bigcup_{m'\in S} G_{m'}$ are pairwise disjoint, since all $m'$-gaps are disjoint from each other for any fixed $m'$, and further, each $G_{m'}$ is entirely contained in a different $m$-gap. Thus at most $k$ of the gaps in $\bigcup_{m'\in S} G_{m'}$ can be full (each of the $k$ symbols can fill at most one of these gaps). It follows that for at least $|S|-k/20=0.03k$ of the symbols $m'\in S$, at most 20 of the gaps in $G_{m'}$ are full (otherwise, there would be more than $20(k/20)=k$ such full gaps). For each of these symbols $m'$, the total number of full $m'$-gaps is at most $0.1a_{m'} + 20$, because there are at most 20 full $m'$-gaps in $G_{m'}$, and $G_{m'}$ contains at least $0.9a_{m'} - 1$ of the $a_{m'}-1$ different $m'$-gaps that exist. Since each such $m'$ is common, we have $a_{m'} > 0.1k$, so $0.1a_{m'} + 20 < 0.9a_{m'}$ for large $k$. The desired conclusion follows, since these $m'$ now satisfy the condition of \cref{lem:fullgaps}
\end{proof}

Now we are ready to prove \cref{lem:expsplits}.

\begin{proof}[Proof of \cref{lem:expsplits}]
Consider $\sigma \in [k]^n$ with $n < (1 + e^{-600})k^2/e$. By \cref{lem:good} and \cref{lem:fullgaps}, after a permutation of the symbols we may assume that the last $0.03k$ symbols $m>0.97k$ are all common, and each have the property that the number of full $m$-gaps is less than $0.9a_m$. It is already clear that condition (1) of the lemma statement is satisfied (recall that $c = e^{-290} < 0.03$). 

Now, for each $i\le k-ck$, let $t_i$ be a uniformly random index satisfying $\sigma(t_i)=i$ (independently for each $i$). Fix some $m>k-ck$, and let $X$ be the random variable counting the number of $m$-splits with respect to $t_1, \dots, t_{k-ck}$. It suffices to show that with probability at least $1-k^{O(1)}\exp(-e^{-560}k)$, we have $X\le a_m(1-e^{-280})$. We will then be able to take the union bound over all $m>ck$.

As before, suppose that in $\sigma$, the symbol $m$ appears in positions $s_1, \dots, s_{a_m}$ (in increasing order). Let $J$ be the set of all $j$ such that the $m$-gap between $s_j$ and $s_{j+1}$ is not full. Then, since we are assuming that the number of full $m$-gaps is less than $0.9a_m$, we have $|J|\ge 0.1a_m$. Also, for each $i \le k - ck$ and $j \in J$, let $b_{ij}$ be the number of occurrences of $i$ in the $m$-gap between $s_j$ and $s_{j+1}$. Note that since the gap between $s_j$ and $s_{j+1}$ is not full, we always have $b_{ij}/a_i \le 0.9$.

For any $j\in J$, there is an $m$-split at $s_j, s_{j+1}$ precisely when the $m$-gap between $s_j, s_{j+1}$ contains one of $t_1,\dots,t_{k-ck}$. The probability that this event does not occur is
\[\prod_{i=1}^{k-ck}\left(1-\frac{b_{ij}}{a_i}\right) \ge  \exp\left(-2.6\sum_{i=1}^{k-ck}\frac{b_{ij}}{a_i}\right),\]
where the inequality follows from the fact that each $b_{ij}/a_i \le 0.9$. We deduce that
\[\E X\le a_{m} - 1 - \sum_{j \in J} \exp\left(-2.6\sum_{i=1}^{k-ck}\frac{b_{ij}}{a_i}\right).\]
Recalling that $\sum_j b_{ij} \le a_i$ for each $i$, and using convexity, we further deduce that
\begin{align*}
\E X\le a_{m} - 1 - |J| \exp\left(-2.6 \frac{1}{|J|} \sum_{j \in J} \sum_{i=1}^{k-ck}\frac{b_{ij}}{a_i}\right) 
&\le a_{m} - |J| \exp\left(-2.6 \frac{1}{|J|} \sum_{i=1}^{k-ck} \sum_{j \in J}\frac{b_{ij}}{a_i}\right) \\
&\le a_{m} - |J| \exp\left(\frac{-2.6(k-ck)}{|J|}\right).
\end{align*}
Now, recall that $|J|\ge 0.1a_m\ge 0.01k$ (since $m$ is common), so we can compute that $\E X< a_m\left(1 - e^{-270}\right).$
Finally, note that if any $t_i$ (for $1 \le i \le k - ck$) is changed, then this can cause the number of $m$-splits $X$ to increase or decrease by at most one. By the Azuma--Hoeffding inequality (see for example~\cite[Theorem~7.2.1]{AlonSpencer}), we conclude that
\begin{align*}
\Pr(X\ge a_m(1-e^{-280}))&\le\exp\left(-\frac{(a_m(e^{-270}-e^{-280}))^2}{2(k-ck)}\right)
\le \exp(-e^{-560}k),
\end{align*}
as desired (we have used that $a_m\ge 0.1k$, since $m$ is common).
\end{proof}

\section{Concluding remarks} \label{sec:concluding}

In this paper we have introduced a new method to prove lower bounds for universality-type problems, by identifying local inefficiencies in trivial encoding schemes. We have used this method to improve bounds on two different problems. It would be very interesting to find further applications of this idea. For example, Alon showed that the minimum number of vertices in a $k$-induced-universal graph is asymptotic to the trivial lower bound, but he observed that the trivial lower bound is not exactly tight, and raised the question of better understanding lower-order terms (see \cite[Section~5]{AlonGraph}). There are also many other problems about universality in graphs (for example, universality with respect to containment of trees or bounded-degree graphs; see for example \cite{AKTZ}), where it is not yet known whether trivial lower bounds are asymptotically tight.

However, there seem to be some difficulties in applying our methods to graph problems. Roughly speaking, the reason that we were able to obtain improvements in the setting of permutation patterns is as follows. For a pattern $\pi\in S_k$, the amount of information (entropy) carried by a single value $\pi(t)$ is about $\log (k/e)=\log k-1$ ``on average'' (because the information carried by $\pi$ itself is $\log(k!)\approx k\log(k/e)$). Taking a different point of view, there are $k$ possibilities for $\pi(k)$, so when viewed in isolation, the amount of information carried by $\pi(k)$ is $\log k$. The first point of view is relevant for computing trivial lower bounds, and the second point of view is relevant for improving local inefficiencies. It was important for our proof strategy that these two points of view gave very similar answers. However, in most graph problems, these two points of view tend to give quite different answers: specifying the adjacencies of a single vertex requires much more information than the ``average information per vertex,'' and therefore our methods do not seem to be directly applicable to graphs. It would be interesting to investigate this further.

On the subject of superpatterns, obviously there is still a large gap between our new bound and the upper bound $(1/2+o(1)) k^2$ obtained by Miller. It should be clear from the proof of \cref{thm:original} that it is possible to make various small improvements to our lower bound: for example, it was convenient to restrict our attention to widths $t_{i+1}-t_{i-1}$ only for even $i$, but with a more sophisticated argument one could take both even and odd $i$ into account. Also, the bounds in \cref{lem:widthbound} were rather crude, and presumably one could prove exact large deviation bounds for the number of widths above a given threshold. However, it would be very complicated to fully optimize all aspects of our argument, and it seems unlikely that one could prove a lower bound much larger than $k^2/e^2$ without substantial new ideas. At present, we do not have a strong conjecture for the true minimal length of a $k$-superpattern.

It is worth mentioning some related problems that may be more tractable, and may shed light on the true minimal length of a $k$-superpattern. For example, instead of demanding that our permutation $\sigma$ contains every pattern of length $k$, we can ask for permutations that contain \emph{almost} every pattern of length $k$. Stronger upper bounds are known in this case: a construction by Eriksson, Eriksson, Linusson and Wastlund~\cite{Eriksson} captures all but an exponentially small proportion of $k$-patterns in a permutation of length $(1+o(1))k^2/4$. We remark that this construction represents an obstruction to counting-based arguments: even if Miller's upper bound of $(1+o(1))k^2/2$ turns out to be best-possible (as conjectured in \cite{Eriksson}), a proof of this would have to be sensitive to the difference between containing \emph{all} patterns, and containing \emph{almost all} of them. As suggested by He and Kwan~\cite{hekwan}, it would also be very interesting to explore for which $n$ a \emph{random} permutation of length $n$ contains almost all $k$-patterns. It could be that this holds for $n$ quite close to $k^2/e^2$, and by analogy to Alon's study of universal graphs, this would suggest that there is also a $k$-superpattern of approximately the same length.

The study of pattern containment in random permutations is also of independent interest. It is a celebrated result in probability theory that the longest increasing subsequence in a random permutation of length $n$ is typically about $2\sqrt n$ (see for example~\cite{RomikBook}). This tells us that the ``threshold'' value of $n$, above which a random $n$-permutation is likely to contain the increasing pattern $1\,2\dots k\in S_k$, is approximately $k^2/4$. It would be interesting to understand how the threshold for containment of a pattern $\pi\in S_k$ depends on the structure of $\pi$. Alon (see \cite{Arratia}) conjectured that this threshold is never more than $(1/4+o(1))k^2$; in fact he conjectured that $(1/4+o(1))k^2$ is the threshold for being a $k$-superpattern. The best known bounds are still quite far from this conjecture: recently He and Kwan~\cite{hekwan} proved that a random permutation of length $n = 2000 k^2 \log \log k$ is typically a $k$-superpattern.

In a somewhat different direction, for each $\pi\in S_k$ one can also ask about the length of the longest $\pi$-free subsequence in a random permutation of length $n$. See \cite[Conjecture~1]{albert} for an interesting conjecture along these lines.

Finally, most of the above considerations are also relevant for pattern containment in $r$-ary sequences. Though Kleitman and Kwiatkowski~\cite{KleitmanKwiatkowski} found the asymptotics of the minimal length of a $k$-universal $k$-ary sequence, it is not obvious how the situation changes if $r>k$ or if one only requires containment of almost all patterns. There may also be interesting related problems about random sequences.

{\bf Acknowledgments:} we would like to thank Noga Alon and Xiaoyu He for insightful discussions. We would also like to thank the referees for their careful reading of the manuscript and their valuable comments.

\printbibliography[heading=bibintoc]

\end{document}